\def\blfootnote{\gdef\@thefnmark{}\@footnotetext}
\theoremstyle{plain}
\newtheorem*{theorem*}{Theorem}
\newtheorem*{thma}{Theorem A}
\newtheorem*{thmb}{Theorem B}
\newtheorem*{corc}{Corollary C}
\newtheorem{theorem}{Theorem}[section]
\theoremstyle{remark}
\theoremstyle{Acknowledgments}
\theoremstyle{definition}
\def\mod{{\rm Mod}}
\begin{document}
\blfootnote{\textup{2000} \textit{Mathematics Subject Classification}:
57N05, 20F38, 20F05}
\blfootnote{\textit{Keywords}:
Mapping class groups, nonorientable surfaces, involutions}
\newenvironment{prooff}{\medskip \par \noindent {\it Proof}\ }{\hfill
$\square$ \medskip \par}
    \def\sqr#1#2{{\vcenter{\hrule height.#2pt
        \hbox{\vrule width.#2pt height#1pt \kern#1pt
            \vrule width.#2pt}\hrule height.#2pt}}}
    \def\square{\mathchoice\sqr67\sqr67\sqr{2.1}6\sqr{1.5}6}
\def\pf#1{\medskip \par \noindent {\it #1.}\ }
\def\endpf{\hfill $\square$ \medskip \par}
\def\demo#1{\medskip \par \noindent {\it #1.}\ }
\def\enddemo{\medskip \par}
\def\qed{~\hfill$\square$}

 \title[Generating the Mapping Class Group of a Nonorientable Surface] {Generating the Mapping Class Group of a Nonorientable Surface by Two Elements or By Three involutions}

\author[T{\"{u}}l\.{i}n Altun{\"{o}}z,       Mehmetc\.{i}k Pamuk, and O\u{g}uz Y{\i}ld{\i}z ]{T{\"{u}}l\.{i}n Altun{\"{o}}z,    Mehmetc\.{i}k Pamuk, and O\u{g}uz Y{\i}ld{\i}z}

\address{Department of Mathematics, Middle East Technical University,
 Ankara, Turkey}
\email{atulin@metu.edu.tr}  \email{mpamuk@metu.edu.tr} \email{oguzyildiz16@gmail.com}


\begin{abstract}
We prove that, for $g\geq19$ the mapping class group of a nonorientable surface of genus $g$, $\mod(N_g)$, can be generated by two elements, one of which is of order $g$. We also prove that for $g\geq26$, $\mod(N_g)$ can be generated by three involutions if $g\geq26$. 
\end{abstract}

 \maketitle
  \setcounter{secnumdepth}{2}
 \setcounter{section}{0}

\section{Introduction}

The mapping class group $\mod(N_g)$ of  closed connected nonorientable surface $N_g$  is defined to be the group of the isotopy classes of all self-diffeomorphisms
of $N_g$. In this paper, we are interested in finding  generating sets for $\mod(N_g)$ consisting of least possible number of elements. Since this group is not abelian, a generating set must contain at least two elements. 
Szepietowski~\cite{sz2} proved that $\mod(N_{g})$ is generated by three elements for all $g\geq3$. 
Our first result (see Theorem~\ref{thm1}) answers Problem $3.1(a)$ in~\cite[p.$91$]{F} (cf Problem $5.4$ in~\cite{mk44}). 
\begin{thma}\label{thma}
For $g\geq 19$, the mapping class group $\mod(N_{g})$ is generated by two elements.
\end{thma}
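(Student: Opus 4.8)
The plan is to produce two explicit elements of $\mod(N_g)$, a rotation $\rho$ of order $g$ and one further element $f$, and to prove $\langle \rho, f\rangle = \mod(N_g)$ by showing this subgroup contains a known generating set. I would work with the model of $N_g$ as the $2$-sphere with $g$ crosscaps attached at the vertices of a regular $g$-gon inscribed in the equator; the rotation of $S^2$ by $2\pi/g$ about the polar axis permutes these crosscaps cyclically, descends to a self-diffeomorphism of $N_g$, and induces an order-$g$ automorphism of $H_1(N_g;\Q)\cong\Q^{g-1}$, so it represents an element $\rho\in\mod(N_g)$ of order exactly $g$. Recall the classical facts that $\mod(N_g)$ is generated by finitely many Dehn twists together with one crosscap slide, and that the twist subgroup $\mathcal{T}_g\leq\mod(N_g)$ (generated by all Dehn twists) has index $2$; indeed $\mathcal{T}_g$ is the kernel of the homomorphism $\mod(N_g)\to\Z/2$ given by the determinant of the induced action on $H_1(N_g;\Z)/\mathrm{torsion}\cong\Z^{g-1}$. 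Thus it suffices to show that $G:=\langle\rho,f\rangle$ (i) contains a set of Dehn twists generating $\mathcal{T}_g$, and (ii) contains an element not lying in $\mathcal{T}_g$.

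For step (i), I would choose $f$ so that $G$ contains a Dehn twist $t_c$ about a simple closed curve $c$ chosen so that $c,\rho(c),\rho^2(c),\dots$ forms a symmetric chain of length $g$ on $N_g$ (for $g$ even one may simply take $f=t_c$). Conjugating by powers of $\rho$ then puts the whole family $t_{\rho^i(c)}=\rho^i t_c\rho^{-i}$ into $G$. Using the chain relation along consecutive members of this family, together with lantern relations among suitably disjoint translates, one pushes $G$ to contain Dehn twists about all the curves in one of the standard generating configurations of $\mathcal{T}_g$ appearing in the work of Chillingworth, Stukow and Paris--Szepietowski (a chain of length $g-1$ together with a bounded number of extra twists). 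This is the step in which the genus must be large enough for the auxiliary curves occurring in those relations to be realized disjointly and $\rho$-equivariantly, and I expect the sharp form of this requirement to be exactly what produces the bound $g\geq19$.

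For step (ii), the relevant point is a parity dichotomy. The eigenvalues of $\rho_*$ on $H_1(N_g;\Z)/\mathrm{torsion}$ are $\zeta,\zeta^2,\dots,\zeta^{g-1}$ with $\zeta=e^{2\pi i/g}$, so $\det\rho_*=\zeta^{g(g-1)/2}$, which equals $+1$ when $g$ is odd and $-1$ when $g$ is even. Hence for even $g$ we have $\rho\notin\mathcal{T}_g$ and, once (i) is established, $G=\mod(N_g)$ with $f$ a single Dehn twist. For odd $g$ we have $\rho\in\mathcal{T}_g$, so $G$ can equal $\mod(N_g)$ only if $f\notin\mathcal{T}_g$; I would then take $f$ to be a crosscap slide supported near one adjacent pair of crosscaps, or the product of such a slide with the twist $t_c$, arranged so that its $\rho$-orbit still delivers the Dehn twists needed in (i) --- via $f^2$ (a boundary Dehn twist) and via products $\rho^i(f)\,f^{-1}\in\mathcal{T}_g$ --- while $f$ itself supplies the nontrivial coset of $\mathcal{T}_g$.

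The main obstacle is the combinatorial core of step (i): converting a single $\rho$-orbit of one (twist or crosscap-slide) generator, by means of chain and lantern relations, into a recognized Dehn-twist generating set of $\mathcal{T}_g$, all the while keeping track of the crosscap slides and the $\Z/2$-quotient, and doing it for the smallest possible genus. Producing $\rho$, computing its order and its determinant on homology, and the final assembly are comparatively routine.
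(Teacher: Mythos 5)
Your setup (the order-$g$ rotation $T$ of the crosscap model as one generator, the parity of $\det T_*=(-1)^{g-1}$ on $H_1(N_g;\Z)/\mathrm{torsion}$, and the index-$2$ twist subgroup) is sound, but the proposal stops exactly where the proof has to begin. Step (i) --- converting the single $\rho$-orbit of your second generator into a full Dehn-twist generating set of the twist subgroup ``using the chain relation \ldots together with lantern relations'' --- is asserted, not performed: no curve $c$ is specified, no relation is verified, and the bound $g\geq 19$ is only conjectured to emerge from this step. Since every known two-element (or small-torsion) generating result for mapping class groups lives or dies in precisely this combinatorial verification, what you have is a plausible program rather than a proof. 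Moreover, your strongest concrete claim --- that for even $g$ one may take the second generator to be a single Dehn twist $t_c$ --- is unsupported and goes well beyond what the paper establishes: the paper's second generator is $u_{g-1}\Gamma_{10}C_2^{-1}$, a crosscap transposition times two Dehn twists, for \emph{both} parities. You would need to prove that the twists about the closed chain $\{T^i(c)\}$, together with $T^2$, already generate the twist subgroup, which is far from obvious and is not a consequence of the chain and lantern relations alone.

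It is also worth noting that the paper's route avoids your twist-subgroup/determinant dichotomy entirely. It first proves an intermediate reduction (Theorem~\ref{thm2.1}): for $g\geq 7$, the group is generated by $T$, $A_1A_2^{-1}$, $B_1B_2^{-1}$ and one crosscap transposition; this rests on Szepietowski's generating set and a lantern-relation argument in the style of Korkmaz and Baykur--Korkmaz applied to \emph{differences} of twists $AB^{-1}$. The two-generator theorem then follows by a chain of explicit conjugation identities showing that $\langle T, u_{g-1}\Gamma_{10}C_2^{-1}\rangle$ contains $A_1A_2^{-1}$, $B_1B_2^{-1}$ and $u_{g-1}$; the crosscap slide needed to get outside the twist subgroup is built into the second generator from the start rather than recovered by a parity argument. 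If you want to salvage your approach, the honest next step is to fix $f$ explicitly and carry out the curve-by-curve computation; the determinant discussion, while correct, does none of that work for you.
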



The next aim of the paper is to find an answer Problem $3.1(b)$ in~\cite[p.$91$]{F}. Szepietowski showed that $\mod(N_{g})$ can be generated by involutions~\cite{sz1} and later he showed that $\mod(N_{g})$ can be generated by four involutions if $g\geq4$~\cite{sz2}. One can deduce that it can be generated by three involutions by the work of Birman and Chillingworth~\cite{bc} if $g=3$. It is known that any group generated by two involutions is isomorphic to a quotient of a dihedral group. Thus the mapping class group $\mod(N_{g})$ cannot be generated by two involutions. This implies that any generating set consisting only involutions must contain at least three elements.  In this direction, we get the following result (see Theorem~\ref{teven} and Theorem~\ref{todd}):

\begin{thmb}\label{thmb}
For $g\geq 26$, the mapping class group $\mod(N_{g})$ can be generated by three involutions.
\end{thmb}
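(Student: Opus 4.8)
The plan is to build a concrete pair of involutions plus one more involution, following the now-standard strategy for such generation results: first produce an efficient (here two-element) generating set of $\mod(N_g)$ as in Theorem A, then realize those two generators, up to the whole group, as products of involutions whose mutual relations can be controlled. Since Theorem A already gives $\mod(N_g)=\langle F,G\rangle$ with $F$ of order $g$, the natural approach is to write $F$ as a product of two involutions $\rho_1,\rho_2$ (possible because $F$ has finite order $g$, so $\langle F\rangle$ is cyclic and sits inside a dihedral group $\langle\rho_1,\rho_2\rangle$ provided both $\rho_1,\rho_2$ can be taken inside $\mod(N_g)$ — this is where the mapping class interpretation matters: one takes $\rho_1,\rho_2$ to be symmetries of an appropriate symmetric model of $N_g$). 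Then one seeks a third involution $\sigma$ so that $G\in\langle\rho_1,\rho_2,\sigma\rangle$, hence $\langle\rho_1,\rho_2,\sigma\rangle=\mod(N_g)$.

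In more detail, the steps I would carry out are: (1) Fix a model of $N_g$ with a large symmetry group, analogous to the models used in the orientable case (a sphere with cross-caps arranged symmetrically, or a connected sum picture with a rotational and a reflective symmetry), separating the cases $g$ even and $g$ odd since the symmetry data differ — this is why the theorem is split into Theorem~\ref{teven} and Theorem~\ref{todd}. (2) Identify the order-$g$ element $F$ from Theorem A with a rotation $\rho_1\rho_2$ of this model, where $\rho_1,\rho_2$ are explicit reflections; verify $\rho_i^2=1$ in the mapping class group and that $\rho_1\rho_2=F$ by tracking the action on a chain of curves. (3) Choose the second generator $G$ from the Theorem A generating set to be (conjugate to) a single Dehn twist $t_c$, or a bounding-pair/crosscap map, and look for an involution $\sigma$ with $\sigma t_c \sigma^{-1}=t_{c'}$ for a curve $c'$ such that $t_c,t_{c'}$, together with the dihedral group $\langle\rho_1,\rho_2\rangle$, already generate everything. (4) Assemble: show $\langle\rho_1,\rho_2,\sigma\rangle$ contains $F$ and $G$, invoke Theorem A.

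The main obstacle, as usual in these arguments, is step (3)–(4): producing the third involution so that the group generated is all of $\mod(N_g)$ rather than some proper subgroup. Writing a torsion element as a product of two involutions is essentially free, but a single Dehn twist is not torsion and cannot be a product of two involutions by itself; one must instead arrange that $\sigma$ conjugates one twist to another and that finitely many such twists, generated off the dihedral group, give a known generating set of $\mod(N_g)$ (e.g. a Humphries-type generating set by twists and one crosscap slide). Controlling which curves appear — ensuring the orbit of a single curve under $\langle\rho_1,\rho_2,\sigma\rangle$ supplies a full chain — forces the genus bound $g\geq 26$, and checking the relations $\rho_i^2=\sigma^2=1$ and the conjugation identities is where the bulk of the (routine but delicate) curve-chasing lives. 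The lower genus cutoff should come precisely from needing enough "room" to place three commuting/interacting symmetric configurations of crosscaps, paralleling how the bound $g\geq 19$ arose in Theorem A.
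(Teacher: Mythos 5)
Your overall architecture matches the paper's: a symmetric model of $N_g$, two reflections $\rho_1,\rho_2$ whose product is the order-$g$ rotation $T$, a case split by parity of $g$, and a reduction to a previously established small generating set. But the crucial idea — how to manufacture the third involution so that the group generated by all three actually contains a non-torsion mapping class — is missing from your step (3), and as written that step would fail. If $\sigma$ is taken to be a genuine symmetry of the model satisfying $\sigma(c)=c'$, then indeed $\sigma t_c\sigma^{-1}=t_{c'}^{\pm1}$, but this identity is useless unless $t_c$ already lies in $\langle\rho_1,\rho_2,\sigma\rangle$, and nothing in your outline puts any Dehn twist into that subgroup: you only ever have the three symmetries themselves to multiply together. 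You correctly observe that a single twist is not a product of two involutions, but you do not supply the device that gets around this.

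The paper's device (in the style of Korkmaz's three-involution theorem for orientable surfaces, and of Theorem~\ref{teven} and Theorem~\ref{todd} here) is to take the third involution to be $\rho_2 H_1$, where $H_1$ is a product of Dehn twists and a crosscap transposition about curves that $\rho_2$ permutes with orientation reversal — concretely $H_1=A_2B_rB_3u_{r+3}$ for $g=2r+2$ and $H_1=A_2C_{r-1}B_3v_{r+2}$ for $g=2r+1$ — so that $\rho_2H_1\rho_2=H_1^{-1}$ and hence $(\rho_2H_1)^2=1$. This simultaneously certifies that $\rho_2H_1$ is an involution and, since $\rho_2$ is among the generators, places the non-torsion element $H_1$ itself inside the subgroup $H=\langle\rho_1,\rho_2,\rho_2H_1\rangle$. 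The remainder of the proof is then a word computation: conjugating $H_1$ by powers of $T=\rho_2\rho_1$ and forming products of the resulting elements to extract $A_1A_2^{-1}$, $B_1B_2^{-1}$ and a crosscap transposition, at which point Theorem~\ref{thm2.1} (the four-element generating set $\{T, A_1A_2^{-1}, B_1B_2^{-1}, u\}$), rather than Theorem A, finishes the argument. The genus bound $g\geq26$ comes from these explicit computations (note the special subcases $r=16,17,18$), not merely from fitting symmetric configurations onto the surface. Without the anti-invariant product $H_1$, your plan has no route from three involutions to a Dehn twist, so the gap is essential rather than a matter of routine curve-chasing.
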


Let us also point out that $\mod(N_{g})$ admits an epimorphism onto the automorphism group of $H_1(N_g;\mathbb{Z}_2)$ preserving the $\pmod{2}$ intersection pairing~\cite{mpin} and this group is isomorphic to (see~\cite{mk3} and~\cite{sz3})
\begin{eqnarray}
 \begin{cases} Sp(2h;\mathbb{Z}_2)&\text{if $g=2h+1$,} \\ 
 Sp(2h;\mathbb{Z}_2)\ltimes \mathbb{Z}_{2}^{2h+1}&\text{if $g=2h+2$.}  \end{cases}
\nonumber 
\end{eqnarray}
Hence, the action of mapping classes on $H_1(N_g;\mathbb{Z}_2)$ induces an epimorphism from $\mod(N_{g})$ to 
$Sp\big(2\lfloor\dfrac{g-1}{2}\rfloor;\mathbb{Z}_2\big)$, which immediately implies the following corollary:
\begin{corc}
The symplectic group $Sp\big( g-1; \mathbb{Z}_2\big)$ can be generated by two elements for every odd $g\geq19$ and also by three involutions for every odd $g\geq27$. Similarly, the group $Sp\big(g-2; \mathbb{Z}_2\big) \ltimes \mathbb{Z}_{2}^{g-1}$ can be generated by two elements for every even $g\geq 20$ and also by three involutions for every even $g\geq26$.
\end{corc}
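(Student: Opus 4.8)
The plan is to deduce the corollary directly from Theorems~A and~B by composing with the epimorphism $\psi\colon\mod(N_g)\to\Aut\big(H_1(N_g;\mathbb{Z}_2),\cdot\big)$ recorded above, where $\cdot$ denotes the mod-$2$ intersection pairing. The only group theory needed is elementary: a surjective homomorphism sends a generating set onto a generating set, and sends an element whose square is the identity to an element whose square is the identity. Thus, if $\mod(N_g)=\langle a,b\rangle$ then $\Aut\big(H_1(N_g;\mathbb{Z}_2),\cdot\big)=\langle\psi(a),\psi(b)\rangle$ is two-generated, and if $\mod(N_g)=\langle\sigma_1,\sigma_2,\sigma_3\rangle$ with each $\sigma_i$ an involution, then the same target is generated by $\psi(\sigma_1),\psi(\sigma_2),\psi(\sigma_3)$, each of order $1$ or $2$.

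Next I would insert the known identification of the target. For odd $g=2h+1$ it is $Sp(2h;\mathbb{Z}_2)=Sp(g-1;\mathbb{Z}_2)$, and for even $g=2h+2$ it is $Sp(2h;\mathbb{Z}_2)\ltimes\mathbb{Z}_2^{2h+1}=Sp(g-2;\mathbb{Z}_2)\ltimes\mathbb{Z}_2^{g-1}$. Combining this with the hypothesis $g\geq19$ of Theorem~A gives two-element generation of $Sp(g-1;\mathbb{Z}_2)$ for every odd $g\geq19$ and of $Sp(g-2;\mathbb{Z}_2)\ltimes\mathbb{Z}_2^{g-1}$ for every even $g\geq20$; combining it with the hypothesis $g\geq26$ of Theorem~B gives generation by three elements of order dividing $2$ for $Sp(g-1;\mathbb{Z}_2)$ whenever $g\geq27$ is odd and for $Sp(g-2;\mathbb{Z}_2)\ltimes\mathbb{Z}_2^{g-1}$ whenever $g\geq26$ is even. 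This reproduces exactly the ranges in the statement, so the only thing to watch is this translation of the genus bounds into the rank bounds.

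Finally I would check that the three elements produced in the involution case are genuine involutions, not the identity. If one of $\psi(\sigma_1),\psi(\sigma_2),\psi(\sigma_3)$ were trivial, the target would be generated by two involutions, hence a quotient of a dihedral group, hence metabelian, as already noted in the introduction. But in the ranges above $Sp(2h;\mathbb{Z}_2)$ is far from metabelian --- it is a nonabelian finite simple group once $h\geq3$ --- and the semidirect product surjects onto it, so neither target is metabelian; therefore each $\psi(\sigma_i)$ is an involution and we are done. I do not anticipate any genuine difficulty here, since all of the real work is carried by Theorems~A and~B; the argument is purely a matter of transporting those statements along $\psi$ and keeping track of indices.
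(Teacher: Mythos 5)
Your proposal is correct and follows essentially the same route as the paper, which simply observes that the epimorphism of $\mod(N_g)$ onto $\Aut(H_1(N_g;\mathbb{Z}_2))$ (identified as $Sp(g-1;\mathbb{Z}_2)$ for odd $g$ and $Sp(g-2;\mathbb{Z}_2)\ltimes\mathbb{Z}_2^{g-1}$ for even $g$) carries the generating sets of Theorems A and B onto generating sets of the target. Your additional check that the images of the three involutions are nontrivial --- since the targets are not metabelian and hence not dihedral quotients --- is a detail the paper leaves implicit, but it does not change the argument.
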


\medskip

\noindent
\textit{Acknowledgments.} The first author was partially supported by the Scientific and 
Technologic Research Council of Turkey (TUBITAK)[grant number 120F118].


\par  
\section{Preliminaries} \label{S2}
 Let $N_g$ be a closed connected nonorientable surface of genus $g$. 
 Note that the {\textit{genus}} for a nonorientable surface is the number 
 of projective planes in a connected sum decomposition. We use the model for the surface $N_g$ as a sphere with $g$ crosscaps represented shaded disks in all figures of this paper. Note that a crosscap is obtained by deleting the interior of such a disk and identifying the antipodal points on the resulting boundary. The {\textit{mapping class group}} 
 $\mod(N_g)$ of the surface $N_g$ is the group of the isotopy classes of self-diffeomorphisms of $N_g$. We
use the functional notation for the composition of two diffeomorphisms; if $f$ and $g$ are two diffeomorphisms, 
the composition $fg$ means that $g$ is applied first.

A simple closed curve on a nonorientable surface $N_g$ is 
\textit{one-sided} if its regular neighbourhood is 
a M\"{o}bius band and \textit{two-sided} if it is an annulus. If $a$ is a two-sided simple closed curve on $N_g$, to define the Dehn twist $t_a$ about the curve $a$, we need to choose one of two possible 
orientations of its regular neighbourhood (as we did for the curves in Figure~\ref{G}). Throughout the paper,  the right-handed 
Dehn twist $t_a$ about the curve $a$ will be denoted by the corresponding capital 
letter $A$. In our notation, both the curves on $N_g$ and  self-diffeomorphisms of $N_g$ shall be considered up to isotopy. In the following we shall make repeated use of some basic relations in $\mod(N_g)$: for two-sided simple closed curves $a$ and $b$ 
on $N_g$ and for any $f\in \mod(N_g)$,
\begin{itemize}
\item \textit{Commutativity:} If $a$ and $b$ are disjoint, then $AB=BA$.
\item \textit{Conjugation:} If $f(a)=b$, then $fAf^{-1}=B^{\varepsilon}$, where $\varepsilon=\pm 1$ 
depending on the orientation of a regular neighbourhood of $f(a)$ with respect to the chosen orientation.
\end{itemize}

\begin{figure}[h]
\begin{center}
\scalebox{0.6}{\includegraphics{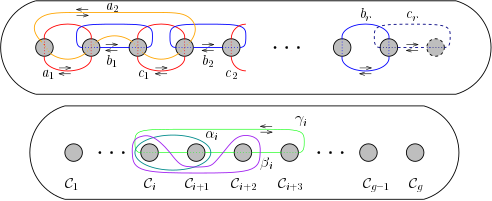}}
\caption{The curves $a_1,a_2,b_i,c_i,\alpha_i,\beta_i$ and $\gamma_i$ on the surface $N_g$, where $g=2r$ or $g=2r+2$. Note that we do not have the curve $c_r$ when $g$ is odd.}
\label{G}
\end{center}
\end{figure}

\begin{figure}[h]
\begin{center}
\scalebox{0.6}{\includegraphics{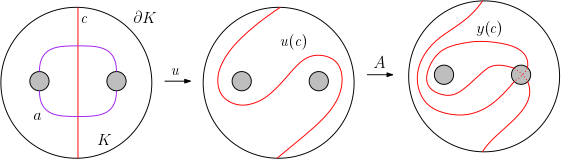}}
\caption{The homeomorphisms $u$ and $y=Au$.}
\label{Y}
\end{center}
\end{figure}
Consider the Klein bottle $K$ with a hole in Figure~\ref{Y}.
We define a \textit{crosscap transposition} $u$ as the isotopy classes of a diffeomorphism interchanging two consecutive crosscaps as shown on the left hand side of Figure~\ref{Y} and equals to the identity outside the Klein bottle with one hole $K$. The effect of the diffeomorphism $y=Au$ on the interval $c$
as in Figure~\ref{Y} can be also constructed as sliding a M{\"{o}}bius band once along the core of another one and keeping each point of the boundary of $K$ fixed. This is a \textit{$Y$-homeomorphism}~\cite{l1} (also called a \textit{crosscap slide}~\cite{mk4}). Note that $A^{-1}u$ is a $Y$-homemorphism i.e. the other choice of the orientation for a neighbourhood of the curve $a$ also gives a $Y$-homeomorphism. We also note that $y^{2}$ is a Dehn twist about $\partial K$.

It is known that $\mod(N_g)$ is generated by Dehn twists and a $Y$-homeomorphism (one crosscap slide)~\cite{l1}. We remark that crosscap transpositions can be used instead of crosscap slides since a crosscap transposition equals to the product of a Dehn twist and a crosscap slide.

Before we finish Preliminaries, let us state a theorem which is used in the proofs of following theorems. We work with the model in Figure~\ref{T} in such a way that the surface is obtained from the $2$-sphere by deleting the interiors of $g$ disjoint disks which are in a circular position and identifying the antipodal points on the boundary. Moreover, note that the rotation $T$ by
$\frac{2\pi}{g}$ about the $x$-axis maps the crosscap $\mathcal{C}_i$  to $\mathcal{C}_{i+1}$  for $i=1,\ldots,g-1$ and $\mathcal{C}_g$ to $\mathcal{C}_1$.
\begin{theorem}\label{thm2.1}
For $g\geq7$, the mapping class group $\mod(N_g)$ can be generated by the elements $T$, $A_1A_2^{-1}$, $B_1B_2^{-1}$, and a $Y$-homeomorphism (or a crosscap transposition).
\end{theorem}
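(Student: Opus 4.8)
The plan is to reduce Theorem~\ref{thm2.1} to an already-known finite generating set of $\mod(N_g)$ and then to exhibit every element of that set inside $G:=\langle T,\;A_1A_2^{-1},\;B_1B_2^{-1},\;y\rangle$, where $y$ is the prescribed $Y$-homeomorphism. The input is Lickorish's theorem that $\mod(N_g)$ is generated by Dehn twists about two-sided curves together with one crosscap slide, together with the fact (Chillingworth and later authors) that one may take a \emph{finite} such list. Working in the circular model of Figure~\ref{G}, I would choose this list of curves among the $a_i,b_i,c_i$ drawn there, arranged so that the rotation $T$ permutes them cyclically --- each $T^{j}$-conjugate of a curve on the list being again a curve of the list, the orientation of a tubular neighbourhood possibly changing, which only flips the sign of a twist exponent. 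Since $y\in G$ by construction, the whole problem reduces to showing that $G$ contains the Dehn twist about every curve in the list.

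The first step is to propagate twists using $T$. Conjugating $A_1A_2^{-1}$ and $B_1B_2^{-1}$ by the powers $T^{j}$ yields $A_iA_{i+1}^{-1}$ and $B_iB_{i+1}^{-1}$ for every $i$; multiplying adjacent ones telescopes, because $A_{i+1}^{-1}A_{i+1}=1$ (and likewise for the $B$'s), so $G$ contains every product $A_iA_j^{-1}$ and $B_iB_j^{-1}$. The curves of Figure~\ref{G} not literally among the $a_i$ and $b_i$ being $T$-translates of these, this already supplies every ``difference'' twist we shall need. Consequently, the instant one honest Dehn twist --- say $A_1$, or $B_1$ --- is placed in $G$, the entire $T$-orbit of honest twists about curves of the list follows.

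Producing that one honest twist is the crux, and it has to come from the crosscap slide: Dehn twists alone do not generate $\mod(N_g)$ --- which is exactly why a crosscap slide occurs in Lickorish's set --- so $T$, $A_1A_2^{-1}$ and $B_1B_2^{-1}$ cannot do it by themselves, and $y$ must break the deadlock. Here I would use the relations recalled in Section~\ref{S2}: $y^{2}$ is the Dehn twist $t_{\partial K}$ about the boundary of the Klein bottle with one hole, and $y=t_{a}u$ with $u$ the crosscap transposition, so that $u=A^{-1}y\in G$. Placing the one-holed Klein bottle so that $\partial K$ is isotopic to, or expressible through a chain or lantern relation in, the curves of Figure~\ref{G}, one reads off a genuine Dehn twist in $G$; fed back into the telescoping above, this yields all the $A_i$ and all the $B_i$. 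Finally, to obtain the twists about the remaining curves $c_i$ (and the extra $c_r$ when $g$ is even) and to confirm that the $a$-, $b$- and $c$-families are genuinely linked inside $G$, I would let $u=A^{-1}y$ and further powers of $T$ act by conjugation: since $u$ performs an explicit interchange of two consecutive crosscaps, conjugation by $u$ carries suitable $a_i$'s onto $b_i$'s and $b_i$'s onto $c_i$'s, and the twists already produced then generate the rest. At that point $G$ contains a crosscap slide and the Dehn twist about every curve in the chosen generating set, hence $G=\mod(N_g)$; the hypothesis $g\geq 7$ is used both to have that finite generating set available and to leave enough crosscaps for the conjugations above to reach every required curve.

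The step I expect to be the main obstacle is precisely this last cluster: passing from the ``balanced'' twist combinations that $T$, $A_1A_2^{-1}$ and $B_1B_2^{-1}$ hand over for free to a single honest Dehn twist, and in parallel welding the three families of curves together --- both pieces of genuinely new information must be squeezed out of the one crosscap slide $y$ and its $T$-conjugates via the identity $y=t_{a}u$, which is delicate to arrange against the concrete curves of Figure~\ref{G}.
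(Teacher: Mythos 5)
Your proposal follows the paper's outline up to a point (reduce to Szepietowski's finite generating set, use $T$-conjugation and telescoping to harvest all the ``difference'' elements $A_iA_j^{-1}$, $B_iB_j^{-1}$), but it goes wrong at exactly the step you flag as the crux, and the way you propose to resolve it would not work. First, your claim that the one honest Dehn twist ``has to come from the crosscap slide'' is a non-sequitur: the fact that Dehn twists do not generate all of $\mod(N_g)$ only shows that $y$ is needed somewhere to reach the full group, not that the subgroup $\langle T, A_1A_2^{-1}, B_1B_2^{-1}\rangle$ contains no honest twist. In fact the paper produces the honest twist \emph{without} using $y$ at all: after a chain of explicit curve computations that place cross-family differences such as $A_2C_1^{-1}$, $D_1C_2^{-1}$ and $D_2A_1^{-1}$ in $G$, the lantern relation is rearranged as $A_3=(A_2C_1^{-1})(D_1C_2^{-1})(D_2A_1^{-1})$, which exhibits $A_3$ as a product of differences already known to lie in $G$. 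The role of $y$ in the paper is only the unavoidable one of enlarging the twist subgroup to all of $\mod(N_g)$.

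Second, your concrete mechanism for extracting an honest twist from $y$ fails: $y^2=t_{\partial K}$ is a twist about the boundary of the one-holed Klein bottle, and for $g\geq 3$ that curve \emph{separates} $N_g$ into a one-holed Klein bottle and a one-holed $N_{g-2}$. A Dehn twist about a separating curve is not conjugate to any of the twists $A_i$, $B_i$, $C_i$ about non-separating curves, so it cannot be ``fed back into the telescoping'' to produce them; you would still need a lantern- or chain-type relation, which is precisely the content you have left unproved. Likewise, the assertion that conjugation by the crosscap transposition $u$ ``carries suitable $a_i$'s onto $b_i$'s and $b_i$'s onto $c_i$'s'' is unsupported and, for the curves of Figure~\ref{G}, not what $u$ does: $u$ is supported in a one-holed Klein bottle around two consecutive crosscaps and does not move one family of the configuration onto another. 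The paper instead welds the families together by computing the images of pairs of curves under explicit products of difference elements (e.g.\ $B_2B_3^{-1}A_2A_1^{-1}(b_2,b_3)=(a_2,b_3)$, giving $A_2B_3^{-1}\in G$) and invoking the symmetry/transitivity/$G$-invariance of the relation $\mathcal{G}$. Both the production of the first honest twist and the inter-family linking therefore remain genuine gaps in your argument.
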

\begin{proof}
Let $G$ be the subgroup of $\mod(N_g)$  generated by the set 
$\lbrace T,A_1A_{2}^{-1},B_1B_{2}^{-1} \rbrace$.
 Szepietowski~\cite[Theorem $3$]{sz2} showed that  $A_1,A_2,B_i$ and $C_i$ as shown in Figure~\ref{G}, together with a $Y$-homeomorphism generate $\mod(N_g)$. 
Therefore, it is enough to prove that the elements  $A_1,A_2,B_i$ and $C_i$ are contained in $G$ for $i=1,\ldots,r$.

Let $\mathcal{S}$ denote the finite set of isotopy classes of two-sided non-separating 
simple closed curves appearing throughout the paper with chosen orientations of neighborhoods. Define a subset $\mathcal{G}$ of $\mathcal{S}\times \mathcal{S}$ 
as 
\[
\mathcal{G} =\lbrace(a,b): AB^{-1}\in G \rbrace.
\]
Using the similar arguments in the proof of ~\cite[Theorem $5$]{mk1}, the set $\mathcal{G}$ satisfies
\begin{itemize}
	\item if $(a,b)\in \mathcal{G}$, then $(b,a)\in \mathcal{G}$ (symmetry),
	\item if $(a,b) \ \textrm{and} \ (b,c)\in \mathcal{G}$, then $(a,c)\in \mathcal{G}$ (transitivity)  and
	\item if $(a,b)\in \mathcal{G}$ and $H\in G$ then $(H(a),H(b))\in \mathcal{G}$ ($G$-invariance).
	\end{itemize}
Thus, $\mathcal{G}$ defines an equivalence relation on $\mathcal{S}$. \\
\noindent
We begin by showing that $B_iC_{j}^{-1}$ is contained in $G$ for all $i,j$. It follows from the definition of $G$ and from the fact that $T(b_1,b_2)=(c_1,c_2)$, we have $C_1C_{2}^{-1}\in G$ (here, we use the notation $f(a,b)$ to denote $(f(a),f(b))$). Also, by conjugating  $C_1C_{2}^{-1}$ with powers of $T$, one can conclude that $G$ contains the elements $B_{i}B_{i+1}^{-1}$ and $C_{i}C_{i+1}^{-1}$. Moreover, the transitivity implies that the elements $B_{i}B_{j}^{-1}$ and $C_{i}C_{j}^{-1}$ are in $G$. To start with, since $B_2B_{3}^{-1}\in G$ and it is easy to verify that 
\[
B_2B_{3}^{-1}A_2A_{1}^{-1}(b_2,b_3)=(a_2,b_3),
\]
so that $A_2B_{3}^{-1} \in G$. Then, we have
\[
	 (A_1A_{2}^{-1})(A_2B_{3}^{-1})(B_3B_{2}^{-1})=A_{1}B_{2}^{-1}\in G,
\]
since $G$ contains each of the factors. Thus, $T(a_1,b_2)=(b_1,c_2)$ implies that $B_1C_{2}^{-1}$ is also in $G$. Moreover, $G$ contains the element
\[
B_1C_{1}^{-1}=(B_1C_{2}^{-1})(C_2C_{1}^{-1}).
\]
Thus,  $B_iC_{i}^{-1}\in G$ by conjugating with powers of $T$ for all $i=1,\ldots,r-1$. Again,the transitivity implies that $B_iC_{j}^{-1}\in G$. Note that, we have 
\begin{itemize}
	\item $(A_1B_{2}^{-1})(B_2C_{1}^{-1})=A_1C_{1}^{-1} \in G$,
	\item$(C_1A_{1}^{-1})(A_1A_{2}^{-1})=C_1A_{2}^{-1}\in G$ and
	\item$(C_2C_{1}^{-1})(C_1A_{1}^{-1})=C_2A_{1}^{-1}\in G$
	\end{itemize}
	from which it follows that the elements $A_1C_{1}^{-1}$, $C_1A_{2}^{-1}$ and $C_2A_{1}^{-1}$ are all in $G$.
	
It can also be verified that 
\[
(A_1B_{2}^{-1})(A_1C_{1}^{-1})(A_1C_{2}^{-1})(A_1B_2^{-1})(a_2,a_1)=(d_2,a_1)
\]	
so that $D_2A_1^{-1}\in G$. Also, the element $D_2C_2^{-1}=(D_2A_1^{-1})(A_1C_2^{-1})$ is in $G$. It can also be shown that
\[
(C_2B_{1}^{-1})(C_2A_{1}^{-1})(C_2C_{1}^{-1})(C_2B_{1}^{-1})(d_2,c_2)=(d_1,c_2),
\]
which implies that $G$ contains $D_1C_{2}^{-1}$. Thus, $G$ contains the element
\[
 D_1A_1^{-1}=(D_1C_2^{-1})(C_2A_1^{-1})
 \]
  (here, the curves $d_1$ and $d_2$ are shown in \cite[Figure $4$]{bk}).
By similar arguments as in the proof of ~\cite[Lemma $5$]{bk}, for $g\geq7$ the lantern relation implies that
\[
A_3=(A_2C_{1}^{-1})(D_1C_{2}^{-1})(D_2A_{1}^{-1} ).
\]
Since $G$ contains each factor on the right hand side,  $A_3\in G$. It follows from 
the diffeomorphism $A_3(B_3B_1^{-1})$ maps the curve $a_3$ to $b_3$ that
\[
B_3=A_3(B_3B_{1}^{-1})A_3(B_1B_{3}^{-1})A_{3}^{-1}\in G.
\]
By conjugating $B_3$ with the powers of $T$, we conclude that  $A_1,B_1,C_1,\ldots B_{r-1},C_{r-1}$ and $B_r$
are all in $G$. Moreover, 
\[
A_2=(A_2A_{1}^{-1})A_1 \in G.
\]
Therefore, the Dehn twist generators are contained in $G$. This finishes the proof.
\end{proof}

\section{A generating set for $\mod(N_g)$}\label{S3}
In this section, we work with the model in Figure~\ref{T}. Let us denote by $u_i$ the crosscap transposition supported on the one holed Klein bottle whose boundary is the curve $\alpha_i$ shown in Figure~\ref{G}. Note that the rotation $T$ takes $\alpha_i$ to $\alpha_{i+1}$ and the crosscap $\mathcal{C}_{i}$ to $\mathcal{C}_{i+1}$, which implies that $Tu_iT^{-1}=u_{i+1}$.

\begin{figure}[h]
\begin{center}
\scalebox{0.3}{\includegraphics{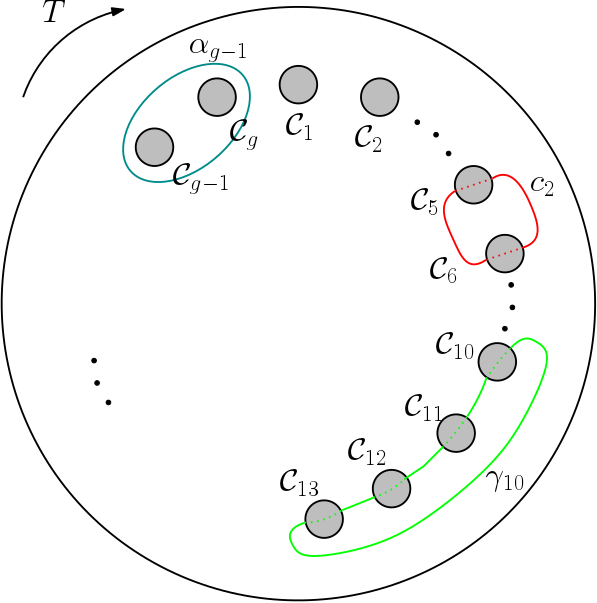}}
\caption{The rotation $T$ and the curves $c_2,\gamma_{10}$ and $\alpha_{g-1}$.}
\label{T}
\end{center}
\end{figure}

\begin{theorem}\label{thm1}
For $g\geq19$, the mapping class group $\mod(N_g)$ is generated by $\lbrace T,u_{g-1}\Gamma_{10}C_2^{-1} \rbrace$.
\end{theorem}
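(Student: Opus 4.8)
The plan is to show that the subgroup $G$ generated by $\{T, u_{g-1}\Gamma_{10}C_2^{-1}\}$ contains the four generators of $\mod(N_g)$ provided by Theorem~\ref{thm2.1}, namely $T$, $A_1A_2^{-1}$, $B_1B_2^{-1}$ and a crosscap transposition. Since $T\in G$ is free, the entire strategy is to manufacture the remaining three elements by combining $T$ with the single complicated generator $x:=u_{g-1}\Gamma_{10}C_2^{-1}$ and its $T$-conjugates $T^k x T^{-k}$. The first step is bookkeeping: identify precisely which curves $T^k(\gamma_{10})$, $T^k(c_2)$ and $T^k(\alpha_{g-1})$ are, and hence which elements $T^k x T^{-k} = u_{T^k(\alpha_{g-1})}\,T_{T^k(\gamma_{10})}\,T_{T^k(c_2)}^{-1}$ lie in $G$. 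Because $\gamma_{10}$ and $c_2$ involve indices that are small compared to $g\geq 19$, and $\alpha_{g-1}$ sits near the ``end'' of the crosscap chain, for suitable ranges of $k$ these three supports become pairwise disjoint, so that in those conjugates the three factors commute and can be separated.

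The key idea for separating the crosscap transposition from the Dehn twists is a cancellation trick: choose two powers $k, k'$ so that $T^k x T^{-k}$ and $T^{k'} x T^{-k'}$ share the same $u$-factor (or the same Dehn-twist factors) up to disjointness, and multiply one by the inverse of the other so that the shared part cancels, leaving a product of Dehn twists only, or an isolated crosscap transposition. Concretely, I expect one can first extract an element of the form $\Gamma_i C_j^{-1}$ (a product of two Dehn twists), then —since $T$ permutes the $\gamma$'s and $c$'s cyclically and these curves are carried to the $b_i$ and $a_i$ by further mapping classes already shown to lie in $G$— run an equivalence-relation argument exactly parallel to the one in the proof of Theorem~\ref{thm2.1}: define $\mathcal{G}=\{(a,b): AB^{-1}\in G\}$, check symmetry, transitivity and $G$-invariance, and use concrete curve identities (maps sending one curve to another) to show $A_1A_2^{-1}, B_1B_2^{-1}\in G$. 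Once those two are in $G$, the $u$-factor is recovered by writing $u_{g-1} = x \, C_2 \, \Gamma_{10}^{-1}$, where $C_2$ and $\Gamma_{10}$ are themselves obtained inside $G$ from $A_1A_2^{-1}$, $B_1B_2^{-1}$ and $T$ via the chain of identities in Theorem~\ref{thm2.1} (the curves $c_2$ and $\gamma_{10}$ are in the orbit-closure of $a_1,a_2,b_1,b_2$ under $G$). Then Theorem~\ref{thm2.1} applies and $G=\mod(N_g)$.

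The main obstacle I anticipate is the geometric verification that the relevant conjugates $T^k(\gamma_{10})$, $T^k(c_2)$, $T^k(\alpha_{g-1})$ are mutually disjoint for enough values of $k$ — this is exactly where the hypothesis $g\geq 19$ should be consumed, since one needs enough ``room'' among the $g$ crosscaps for the supports of these curves (which span a bounded number of consecutive crosscaps) to be slid apart by the rotation. A secondary delicate point is orientation/sign bookkeeping in the conjugation relation $fAf^{-1}=B^{\pm1}$: one must track the $\varepsilon=\pm1$ carefully so that the telescoping products of the form $(A_1A_2^{-1})(A_2B_3^{-1})(B_3B_2^{-1})$ actually collapse as intended rather than producing unwanted squares of twists. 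Neither difficulty is conceptual; both are finite checks on the explicit curves of Figures~\ref{G} and~\ref{T}, and the argument structure is dictated by Theorem~\ref{thm2.1}.
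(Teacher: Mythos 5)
Your overall architecture is the same as the paper's: reduce to Theorem~\ref{thm2.1} by showing that $G=\langle T, x\rangle$ with $x=u_{g-1}\Gamma_{10}C_2^{-1}$ contains $A_1A_2^{-1}$, $B_1B_2^{-1}$ and a crosscap transposition, and recover $u_{g-1}$ at the end as $x\cdot C_2\Gamma_{10}^{-1}$ once all the Dehn twist generators are available. That framing is right, and your observation about where $g\geq 19$ is consumed (keeping the supports of the various conjugates from colliding) is also essentially correct.

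However, the central extraction mechanism you describe does not work as stated, and this is where the real content of the proof lives. You propose to choose powers $k,k'$ so that $T^kxT^{-k}$ and $T^{k'}xT^{-k'}$ ``share the same $u$-factor (or the same Dehn-twist factors)'' and then cancel. But $T^k$ carries $\alpha_{g-1}$, $\gamma_{10}$ and $c_2$ to $\alpha_{g-1+k}$, $\gamma_{10+k}$ and a curve in the $b/c$-chain respectively, and these three families are permuted without ever colliding for distinct $k$; so two distinct pure $T$-conjugates of $x$ never share a factor, and the product of one with the inverse of the other cancels nothing. The paper's actual first move is different: after forming $F_1=x$ and $F_2=T^{-4}F_1T^{4}=u_{g-5}\Gamma_6A_1^{-1}$, it conjugates $F_2$ by the element $F_2F_1^{-1}\in G$, which \emph{fixes} $\alpha_{g-5}$ and $a_1$ but sends $\gamma_6$ to $c_2$, producing $F_3=u_{g-5}C_2A_1^{-1}$. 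Only now do $F_2$ and $F_3$ share two of their three factors, so that $F_2F_3^{-1}=\Gamma_6C_2^{-1}$ is a genuine product of two Dehn twists in $G$; everything downstream (obtaining $B_2C_2^{-1}$, $B_1B_2^{-1}$, $A_2A_1^{-1}$, and finally $u_{g-1}$) is bootstrapped from this seed by the same device. Your sketch gestures at using ``further mapping classes already shown to lie in $G$,'' but without the specific idea of conjugating one $T$-conjugate by a product of two others so as to align two of the three supports before cancelling, the argument cannot get started: there is nothing yet in $G$ besides $T$ and the conjugates of $x$. So the proposal has a genuine gap at its first nontrivial step, in addition to being a plan rather than a verified computation.
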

\begin{proof}
Let $F_1=u_{g-1}\Gamma_{10}C_2^{-1}$ and let us denote by $G$ the subgroup of $\mod(N_g)$ generated by $T$ and $F_1$. It follows from Theorem~\ref{thm2.1} that it suffices to prove that the subgroup $G$ contains the elements $A_1A_2^{-1}$, $B_1B_2^{-1}$ and $u_{g-1}$ to prove that $G=\mod(N_g)$.

Let $F_2$ denote the conjugation of $F_1$ by  $T^{-4}$.
It follows from $T^{-4}$ maps the curves $(\alpha_{g-1},\gamma_{10}, c_2)$ to $(\alpha_{g-5},\gamma_{6},a_1)$ that 
\[
F_2=T^{-4}F_1T^4=u_{g-5}\Gamma_{6}A_1^{-1}
\]
is contained in $G$. Let $F_3$ denote the element $(F_2F_1^{-1})F_2(F_2F_1^{-1})^{-1}$ that is contained in $G$. Hence
\[
F_3=(F_2F_1^{-1})F_2(F_2F_1^{-1})^{-1}=u_{g-5}C_2A_1^{-1}.
\]
Since we have similar cases in the remaining parts of the paper, let us give some details before we proceed. It can be verified that the diffeomorphism $F_2F_1^{-1}$ send the curves $(\alpha_{g-5},\gamma_6,a_1)$ to the curves $(\alpha_{g-5},c_2,a_1)$. Then, we get
\begin{eqnarray*}
F_3&=&(F_2F_1^{-1})F_2(F_2F_1^{-1})^{-1}\\
&=&(F_2F_1^{-1})u_{g-5}\Gamma_{6}A_1^{-1}(F_2F_1^{-1})^{-1}\\
&=&u_{g-5}C_2A_1^{-1}.
\end{eqnarray*}
Thus, we have the elements $F_2F_3^{-1}=\Gamma_{6}C_2^{-1}$ and $T^4(\Gamma_{6}C_2^{-1})T^{-4}=\Gamma_{10}C_4^{-1}$, which are both contained in $G$.

Moreover, we have the following elements
\begin{eqnarray*}
F_4&=&(C_4\Gamma_{10}^{-1})F_1=u_{g-1}C_4C_2^{-1},\\
F_5&=&T^{-1}F_4T=u_{g-2}B_4B_2^{-1} \textrm{ and }\\
F_6&=&(F_4F_5)F_3(F_4F_5)^{-1}=u_{g-5}B_2A_1^{-1},
\end{eqnarray*}
all of which are contained in the subgroup $G$. From this, we get the element $F_6F_3^{-1}=B_2C_2^{-1}\in G$. Also, we have  $T(B_2C_2^{-1})T^{-1}=C_2B_3^{-1}\in G$, which gives rise to
\[
B_2B_3^{-1}=(B_2C_2^{-1})(C_2B_3^{-1})\in G.
\]
This implies that $T^{-2}(B_2B_3^{-1})T^2=B_1B_2^{-1}$ is in $G$. We also have the elements
 \begin{eqnarray*}
 T^{2}(C_2B_3^{-1})T^{-2}&=&C_3B_4^{-1}\in G \textrm{ and }\\
 T^{-2}(\Gamma_{10}C_4^{-1})T^{2}&=&\Gamma_8C_3^{-1}\in G,
  \end{eqnarray*}
 implying that $\Gamma_8B_4^{-1}=(\Gamma_8C_3^{-1})(C_3B_4^{-1})\in G$. The conjugation of the element $\Gamma_8B_4^{-1}$ by $T^{-7}$ is the element $\Gamma_1A_1^{-1}=A_2A_1^{-1}$ which is contained in $G$. By the proof of Theorem~\ref{thm2.1}, the subgroup $G$ contains the elements $A_1$, $A_2$, $B_i$ and $C_i$ for $i=1,\ldots,r$. Then, in particular we have the elements $T^9A_2T^{-9}=\Gamma_{10}\in G$ and $C_2\in G$. We conclude that $u_{g-1}=F_1(C_2\Gamma_{10}^{-1})\in G$, which completes the proof.
 \end{proof}
 \section{Involution generators for $\mod(N_g)$}
In the first part of this section, where the genus of the surface $N_g$ is even, we refer to Figure~\ref{RE} for the involution generators $\rho_1$ and $\rho_2$ of $N_g$. The elements $\rho_1$ and $\rho_2$ are reflections about the indicated planes in Figure~\ref{RE} in such a way that the rotation $T$, depicted in Figure~\ref{T}, is given by $T=\rho_2\rho_1$. \begin{figure}[h]
\begin{center}
\scalebox{0.46}{\includegraphics{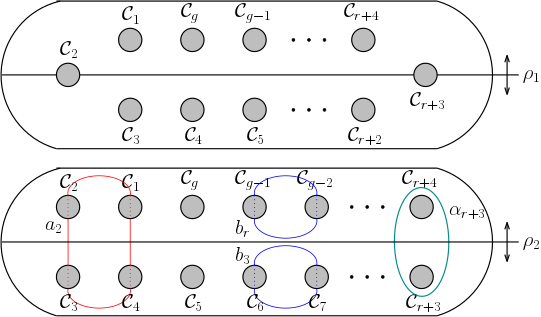}}
\caption{The reflections $\rho_1$ and $\rho_2$ for $g=2r+2$.}
\label{RE}
\end{center}
\end{figure}
\begin{theorem}\label{teven}
For $g=2r+2\geq26$, the mapping class group $\mod(N_g)$ is generated by the involutions $\rho_1$, $\rho_2$ and $\rho_2A_2B_rB_3u_{r+3}$.
\end{theorem}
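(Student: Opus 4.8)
The plan is to imitate the strategy used in the proof of Theorem~\ref{thm1}: reduce the problem to showing that the group $G$ generated by our three involutions contains enough of the standard generators to invoke Theorem~\ref{thm2.1}. First I would set $\sigma = \rho_2 A_2 B_r B_3 u_{r+3}$ and let $G = \langle \rho_1,\rho_2,\sigma\rangle$. Since $T = \rho_2\rho_1$, the rotation $T$ lies in $G$ immediately, and $\rho_2\sigma = A_2 B_r B_3 u_{r+3} \in G$. So, as in Theorem~\ref{thm1}, it suffices to produce the elements $A_1A_2^{-1}$, $B_1B_2^{-1}$ and a crosscap transposition $u_i$ inside $G$. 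The first task is to verify that $\sigma$ really is an involution: this needs that the reflection $\rho_2$ conjugates the word $A_2 B_r B_3 u_{r+3}$ to its inverse, i.e. $\rho_2$ reverses each twist (sending $a_2,b_r,b_3$ to curves with the opposite chosen orientation) and fixes the relevant one-holed Klein bottle setwise reversing its co-orientation, so that $\rho_2 (A_2B_rB_3u_{r+3})\rho_2 = (A_2B_rB_3u_{r+3})^{-1}$; this is where one must read off the action of $\rho_2$ on Figure~\ref{RE} carefully.

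Next I would extract Dehn twist data from $h := \rho_2\sigma = A_2 B_r B_3 u_{r+3}$ by conjugating with powers of $T$ and forming commutator-type words $h'\,h\,h'^{-1}$, exactly as $F_2,\dots,F_6$ were built in Theorem~\ref{thm1}. The curves $a_2, b_3, b_r$ are spread out along the surface, and conjugating $h$ by a suitable power $T^k$ moves $u_{r+3}$ to $u_{r+3+k}$ and the three twist curves to $T^k$-translates; choosing $k$ so that two such conjugates share all but one curve lets me cancel the common part and isolate a product of the form $X Y^{-1}$ for single curves $X,Y$. Iterating this (and using transitivity of the relation ``$XY^{-1}\in G$'' as in the proof of Theorem~\ref{thm2.1}) I expect to reach $A_1A_2^{-1}$ and then, via $T$-conjugation of a $B_iB_{i+1}^{-1}$ type element, $B_1B_2^{-1}$. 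Once those two are in $G$, Theorem~\ref{thm2.1} will eventually be applicable provided I also recover one crosscap transposition; but the full Dehn-twist subgroup appears first (by the argument quoted at the end of Theorem~\ref{thm1}'s proof, $A_1,A_2,B_i,C_i\in G$), and then $u_{r+3} = B_3^{-1}B_r^{-1}A_2^{-1}\,h \in G$.

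The main obstacle I anticipate is bookkeeping: there are now \emph{three} twist curves $a_2,b_3,b_r$ attached to $h$ rather than the single extra curve in $F_1$, so the ``cancel the common part'' step requires the three curves to be simultaneously disjoint from, or simultaneously fixed by, whichever conjugating element I use — I must choose the powers of $T$ (and which conjugates to multiply) so that at each stage exactly one curve fails to match up. Getting $g\geq 26$ rather than a smaller bound presumably comes precisely from needing enough crosscaps that $b_3$, $b_r$ and $u_{r+3}$ (with $r\geq 12$) are far enough apart for these disjointness conditions to hold and for Theorem~\ref{thm2.1}'s hypothesis $g\geq 7$ together with the lantern-relation argument to apply. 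I would also double-check that $\rho_1$ (not just $\rho_2$) is genuinely needed only to produce $T$, so that the odd-genus case can be handled separately in Theorem~\ref{todd} with its own pair of reflections. Modulo these curve-chasing verifications on Figures~\ref{G}, \ref{T} and \ref{RE}, the argument is a direct adaptation of the two preceding proofs.
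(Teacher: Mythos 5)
Your plan is essentially the paper's own proof: verify that $\rho_2$ conjugates $A_2B_rB_3u_{r+3}$ to its inverse, note $T=\rho_2\rho_1$ and $H_1=A_2B_rB_3u_{r+3}$ lie in the subgroup, then manufacture $C_2B_3^{-1}$, $B_1B_2^{-1}$ and $A_2A_1^{-1}$ by conjugating $H_1$ with powers of $T$ and forming words of the shape $(H'H)H'(H'H)^{-1}$, finally recovering $u_{r+3}=(B_3^{-1}B_r^{-1}A_2^{-1})H_1$ once Theorem~\ref{thm2.1} yields all the Dehn twists. The only content you defer --- the explicit choices of powers of $T$ (the paper uses $T^{7}$ and $T^{6}$) and the collision-avoiding case split at $r=16,17,18$ where $B_7$ must be replaced by $B_9$ --- is exactly the bookkeeping you flag, and it works out as you anticipate.
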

\begin{proof}
Consider the surface $N_g$ as in Figure~\ref{RE}. It follows from
\[
\rho_2(a_2)=a_2 \textrm{ and }    \rho_2(b_r)=b_3
\]
and also $\rho_2$ reverses the given orientation of a neighbourhood of a two-sided simple closed curve that
\[
\rho_2A_2\rho_2=A_2^{-1} \textrm{ and } \rho_2B_r\rho_2=B_3^{-1}.
\]
Since $\rho_2u_{r+3}\rho_2=u_{r+3}^{-1}$, one can verify that the element $\rho_2A_2B_rB_3u_{r+3}$ is an involution. Let $H_1=A_2B_rB_3u_{r+3}$ and let $H$ be the subgroup of $\mod(N_g)$ generated by the set
\[
\lbrace \rho_1,\rho_2, \rho_2H_1\rbrace.
\]
It is clear that $H_1$ and $T=\rho_2\rho_1$ are contained in the subgroup $H$. By Theorem~\ref{thm2.1}, we need to prove that the subgroup $H$ contains the elements $A_1A_2^{-1}$, $B_1B_2^{-1}$ and $u_{r+3}$. Let $H_2$ be the conjugation of $H_1$ by $T^7$. Thus
\[
H_2=T^7H_1T^{-7}=\Gamma_8C_2C_6u_{r+10} \in H.
\]
Let 
\[
H_3=(H_2H_1)H_2(H_2H_1)^{-1}=\Gamma_8B_3C_6u_{r+10},
\]
which is also in $H$. From this, we get the element $H_2H_3^{-1}=C_2B_3^{-1}\in H$ implying that $T(C_2B_3^{-1})T^{-1}=B_3C_3^{-1} \in H$. One can easily see that  $B_iC_i^{-1} \in H$ by conjugating $B_3C_3^{-1}$ with powers of $T$. Also, since $T(B_3C_3^{-1})T^{-1}=C_3B_4^{-1} \in H$, similarly $C_{i}B_{i+1}^{-1}\in H$ by conjugating $C_3B_4^{-1}$ with powers of $T$. Hence, we have the elements
\[
B_iB_{i+1}^{-1}=(B_iC_i^{-1})(C_{i}B_{i+1}^{-1})
\]
which are in $H$ for all $i=1,\ldots,r-1$. Moreover, $B_iB_j^{-1}\in H$ by the transitivity. In particular $B_1B_2^{-1}\in H$. Now, we have the following elements
\begin{eqnarray*}
H_4&=&(B_7B_3^{-1})H_1=A_2B_7B_ru_{r+3} \textrm{ if }r\neq 16,17,18,\\
(H_4&=&(B_9B_3^{-1})H_1=A_2B_9B_ru_{r+3} \textrm{ if }r=16,17,18,)\\
H_5&=&T^{6}H_4T^{-6}=\Gamma_{7} B_{10}B_2u_{r+9}\textrm{ if }r\neq 16,17,18,\\
(H_5&=&T^{6}H_4T^{-6}=\Gamma_{7} B_{12}B_2u_{r+9} \textrm{ if }r=16,17,18,)\\
H_6&=&(H_5H_4)H_5(H_5H_4)^{-1}=\Gamma_{7} B_{10}A_2u_{r+9}\textrm{ if }r\neq 16,17,18,\\
(H_6&=&(H_5H_4)H_5(H_5H_4)^{-1}=\Gamma_{7} B_{12}A_2u_{r+9}\textrm{ if }r=16,17,18,)
\end{eqnarray*}
which are all contained in $H$. Thus, we get the element $H_6H_5^{-1}=A_2B_2^{-1}\in H$. On the other hand, since $C_1B_2^{-1}$ is contained in $H$, the subgroup $H$ contains the following elements
\[
T^{-2}(C_1B_2^{-1})T^2=A_1B_1^{-1},
\]
\[
(A_1B_1^{-1})(B_1B_2^{-1})=A_1B_2^{-1},
\]
\[
(A_2B_2^{-1})(B_2A_1^{-1})=A_2A_1^{-1}.
\]
It follows from $T$, $A_1A_2^{-1}$ and $B_1B_2^{-1}$ are in $H$ that the Dehn twists $A_1$, $A_2$, $B_i$ and $C_i$ are also in $H$ for $i=1,\ldots,r$. This implies that 
\[
u_{r+3}=(B_3^{-1}B_r^{-1}A_2^{-1})H_1\in H,
\]
which completes the proof.
\end{proof}
\begin{figure}[h]
\begin{center}
\scalebox{0.3}{\includegraphics{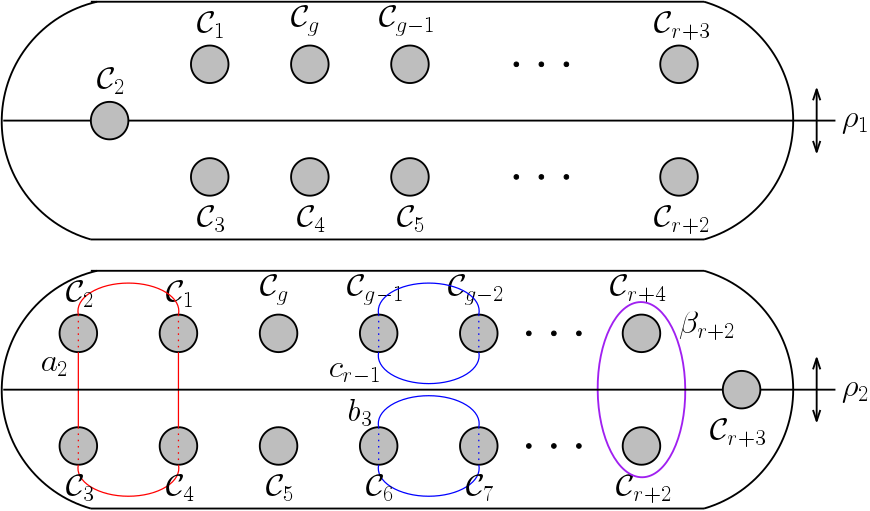}}
\caption{The reflections $\rho_1$ and $\rho_2$ for $g=2r+1$.}
\label{RO}
\end{center}
\end{figure}
In the second part of this section, where the genus of the surface $N_g$ is odd, we refer to Figure~\ref{RO} for the involution generators $\rho_1$ and $\rho_2$ of $N_g$. Similarly, the elements $\rho_1$ and $\rho_2$ are reflections about the indicated planes in Figure~\ref{RO} such that the rotation $T$ in Figure~\ref{T} is given by $T=\rho_2\rho_1$. In the proof of the following theorem, we use the crosscap transposition supported on the one holed Klein bottle whose boundary is the curve $\beta_i$ shown in Figure~\ref{G}. Let us denote this crosscap transposition by $v_i$.
Note that the rotation $T$ sends $\beta_i$ to $\beta_{i+1}$ and the crosscap $\mathcal{C}_i$ to $\mathcal{C}_{i+1}$, which implies that $Tv_iT^{-1}=v_{i+1}$.
\begin{theorem}\label{todd}
For $g=2r+1\geq27$, the mapping class group $\mod(N_g)$ is generated by the involutions $\rho_1$, $\rho_2$ and $\rho_2A_2C_{r-1}B_3v_{r+2}$.
\end{theorem}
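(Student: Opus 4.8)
The plan is to mirror the structure of the proof of Theorem~\ref{teven} almost verbatim, replacing the even-genus curve configuration by the odd-genus one in Figure~\ref{RO}. First I would check that $\rho_2A_2C_{r-1}B_3v_{r+2}$ is an involution: since $\rho_2$ fixes $a_2$ and swaps $c_{r-1}\leftrightarrow b_3$ (and reverses the chosen orientations of their neighbourhoods) while $\rho_2v_{r+2}\rho_2=v_{r+2}^{-1}$, the four factors $A_2,C_{r-1},B_3,v_{r+2}$ pairwise commute and each is conjugated to its inverse by $\rho_2$, so $(\rho_2H_1)^2=1$ with $H_1=A_2C_{r-1}B_3v_{r+2}$. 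Then, setting $H$ to be the subgroup generated by $\rho_1,\rho_2,\rho_2H_1$, I note $T=\rho_2\rho_1\in H$ and $H_1=\rho_2(\rho_2H_1)\in H$, so by Theorem~\ref{thm2.1} it suffices to show $A_1A_2^{-1}$, $B_1B_2^{-1}$ and $v_{r+2}$ lie in $H$ (and, as in the proof of Theorem~\ref{thm2.1}, a crosscap transposition such as $v_{r+2}$ may be used in place of a $Y$-homeomorphism).

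The engine is the same conjugation trick used repeatedly in Theorems~\ref{thm1} and~\ref{teven}: conjugate $H_1$ by a suitable power $T^k$ to get a second element $H_2=T^kH_1T^{-k}$ supported on a disjoint-enough configuration, form $H_3=(H_2H_1)H_2(H_2H_1)^{-1}$ so that $H_2H_1$ carries one of the curves of $H_2$ onto another curve while fixing the rest, and then read off a product of two Dehn twists as $H_2H_3^{-1}$. Carrying this out I expect to extract first an element of the form $C_iB_j^{-1}$ or $B_iC_j^{-1}$, then — using that $T$ cyclically permutes the $b_i$ and $c_i$ and the transitivity/symmetry/$G$-invariance of the relation $(a,b)\mapsto AB^{-1}\in H$ established in Theorem~\ref{thm2.1} — all elements $B_iB_{i+1}^{-1}$, hence $B_1B_2^{-1}$. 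A second round (the analogue of $H_4,H_5,H_6$ in the proof of Theorem~\ref{teven}, possibly with a case split on small values of $r$ so the auxiliary curves stay disjoint) produces $A_2B_2^{-1}$, and combining with $A_1B_1^{-1}$ (obtained by conjugating some $C_1B_2^{-1}$-type element by $T^{-2}$) and $B_1B_2^{-1}$ yields $A_1A_2^{-1}$. At that point Theorem~\ref{thm2.1} gives that all the Dehn twists $A_1,A_2,B_i,C_i$ lie in $H$, and then $v_{r+2}=(B_3^{-1}C_{r-1}^{-1}A_2^{-1})H_1\in H$.

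The main obstacle is bookkeeping rather than anything conceptual: I must choose the conjugating powers of $T$ so that at each stage the curve configurations are \emph{correctly} nested — i.e. that the particular product of Dehn twists and crosscap slides constituting $H_2H_1$ really does send the indicated curve to the indicated curve and fix the others — and, because the curves $\gamma_i$, $c_i$, $b_i$, $\alpha_i$, $\beta_i$ wrap around the circular arrangement of $g=2r+1$ crosscaps, I need $g\geq27$ to guarantee enough room (this is exactly where the hypothesis enters, and where the small-$r$ case distinctions seen in Theorem~\ref{teven} will reappear). The verification that $T^k$ maps a given triple of curves to another given triple, and that an element like $H_2H_1$ acts as claimed on three specified curves, is the same kind of picture-chasing the authors abbreviate as "it can be verified that" throughout Section~4, and I would present it at the same level of detail, checking one representative case explicitly (as is done for $F_3$ in the proof of Theorem~\ref{thm1}) and asserting the rest. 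One extra point specific to the odd case: the curve $c_r$ does not exist when $g$ is odd, so the configuration in Figure~\ref{RO} uses $c_{r-1}$ as the rightmost $c$-curve, and I must make sure every conjugate $T^j(c_{r-1})$ invoked is again a legitimate curve in $\mathcal{S}$; this is automatic since $T$ permutes the whole labelled family, but it is worth stating.
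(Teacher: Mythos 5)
Your proposal follows essentially the same route as the paper's own proof, which likewise mirrors Theorem~\ref{teven}: verify the involution via $\rho_2(a_2)=a_2$, $\rho_2(c_{r-1})=b_3$ and $\rho_2v_{r+2}\rho_2=v_{r+2}^{-1}$, extract $C_2B_3^{-1}$ from a conjugate $T^7E_1T^{-7}$ via the $(E_2E_1)E_2(E_2E_1)^{-1}$ trick, propagate to all $B_iB_j^{-1}$ by conjugation and transitivity, run a second round ($E_4,E_5,E_6$, with the case split at $r=16,17,18,19$ you anticipated) to get $A_2B_2^{-1}$ and hence $A_1A_2^{-1}$, and finish with $v_{r+2}=(B_3^{-1}C_{r-1}^{-1}A_2^{-1})E_1$. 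The only content you defer is the explicit choice of conjugating powers and the curve-chasing verifications, which you correctly identify as the bookkeeping the paper also abbreviates.
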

\begin{proof}
We will follow the proof of Theorem~\ref{teven}, closely. Let us consider the surface $N_g$ as in Figure~\ref{RO}. Since
\[
\rho_2(a_2)=a_2 \textrm{ and }    \rho_2(c_{r-1})=b_3
\]
and also since $\rho_2$ reverses the given orientation of a neighbourhood of a two-sided simple closed curve, we get
\[
\rho_2A_2\rho_2=A_2^{-1} \textrm{ and } \rho_2C_{r-1}\rho_2=B_3^{-1}.
\]
By the fact that $\rho_2v_{r+2}\rho_2=v_{r+2}^{-1}$, it can be easy to verify that the element $\rho_2A_2C_{r-1}B_3\phi_{r+2,r+4}$ is an involution. Let $E_1=A_2C_{r-1}B_3v_{r+2}$ and let $K$ denote the subgroup of $\mod(N_g)$ generated by the set
\[
\lbrace \rho_1,\rho_2, \rho_2E_1\rbrace.
\]
It is easy to see that $E_1$ and $T=\rho_2\rho_1$ are in $K$. By Theorem~\ref{thm2.1}, we need to show that $K$ contains the elements $A_1A_2^{-1}$, $B_1B_2^{-1}$ and $v_{r+2}$. Let $E_2$ be the following:
\[
E_2=T^7E_1T^{-7}=\Gamma_8C_2C_6v_{r+9} \in K.
\]
Consider the element
\[
E_3=(E_2E_1)E_2(E_2E_1)^{-1}=\Gamma_8B_3C_6v_{r+9},
\]
which belongs to $K$. One can conclude that the element $E_2E_3^{-1}=C_2B_3^{-1}\in K$, which implies that $T(C_2B_3^{-1})T^{-1}=B_3C_3^{-1} \in K$. From this, we get the elements $B_iC_i^{-1} \in H$ by conjugating $B_3C_3^{-1}$ with powers of $T$. Also, since $T(B_3C_3^{-1})T^{-1}=C_3B_4^{-1} \in K$, $C_{i}B_{i+1}^{-1}\in K$ by again conjugating $C_3B_4^{-1}$ with powers of $T$. Thus, we get the elements
\[
B_iB_{i+1}^{-1}=(B_iC_i^{-1})(C_{i}B_{i+1}^{-1}),
\]
which belong to $K$ for all $i=1,\ldots,r-1$. Also, using the transitivity $B_iB_j^{-1}\in K$. In particular $B_1B_2^{-1}\in K$. Moreover, we have the elements
\begin{eqnarray*}
E_4&=&(B_7B_3^{-1})E_1=A_2B_7C_{r-1}v_{r+2} \textrm{ if }r\neq 16,17,18,19,\\
(E_4&=&(B_9B_3^{-1})E_1=A_2B_9C_{r-1}v_{r+2} \textrm{ if }r=16,17,18,19,)\\
E_5&=&T^{6}E_4T^{-6}=\Gamma_{7} B_{10}B_2v_{r+8}\textrm{ if }r\neq 16,17,18,19,\\
(E_5&=&T^{6}E_4T^{-6}=\Gamma_{7} B_{12}B_2v_{r+8} \textrm{ if }r=16,17,18,19,)\\
E_6&=&(E_5E_4)E_5(E_5E_4)^{-1}=\Gamma_{7} B_{10}A_2v_{r+8}\textrm{ if }r\neq 16,17,18,19,\\
(E_6&=&(E_5E_4)E_5(E_5E_4)^{-1}=\Gamma_{7} B_{12}A_2v_{r+8}\textrm{ if }r=16,17,18,19,)
\end{eqnarray*}
which are all contained in the subgroup $K$. Thus, we conclude that the element $E_6E_5^{-1}=A_2B_2^{-1}\in K$.

Since the element $C_1B_2^{-1}\in K$, as in the proof of Theorem~\ref{teven}, one can conclude that the Dehn twists $A_1$, $A_2$, $B_i$ and $C_j$ are in $K$ for $i=1,\ldots,r$ and $j=1,\ldots,r-1$. This implies that $v_{r+2}=(B_3^{-1}C_{r-1}^{-1}A_2^{-1})E_1\in K$, which finishes the proof.
\end{proof}


\end{document}